\documentclass[a4paper,twoside,12pt]{amsart}
\usepackage{amsmath}
\usepackage{amsthm,color}
\usepackage{amssymb}
\usepackage{amsmath}
\usepackage{amssymb}
\usepackage{ifpdf}
\ifpdf
  \usepackage[colorlinks,linkcolor=red,anchorcolor=blue,citecolor=green]{hyperref}
\else
\fi

\newtheorem{theorem}{Theorem}[section]

\newtheorem{corollary}[theorem]{Corollary}

\newtheorem{lemma}[theorem]{Lemma}

\newtheorem{proposition}[theorem]{Proposition}
\newtheorem{question}[theorem]{Question}

\numberwithin{equation}{section}

\newcommand{\ACA}{\mathsf{ACA}_0}
\newcommand{\RCA}{\mathsf{RCA}_0}
\newcommand{\WKL}{\mathsf{WKL}_0}
\newcommand{\WWKL}{\mathsf{WWKL}_0}
\newcommand{\TT}{\mathsf{TT}}

\begin{document}

\title[Perfect Subsets]{On The Computability of Perfect Subsets of Sets with Positive Measure}

\subjclass[2010]{03D32 03F35 03F60 03F30}

\author{C.~T.~Chong}
\address{Department of Mathematics\\
National University of Singapore\\Singapore 119076}
\email{chongct@math.nus.edu.sg}

\author{Wei Li}
\address{Department of Mathematics\\National University of Singapore\\Singapore 119076}
\email{matliw@nus.edu.sg}

\author{Wei Wang}
\address{Institute of Logic and Cognition and Department of Philosophy\\Sun Yat-Sen University\\Guangzhou, China}
\email{wwang.cn@gmail.com}

\author{Yue Yang}
\address{Department of Mathematics\\National University of Singapore\\Singapore 119076}
\email{matyangy@nus.edu.sg}

\thanks{The authors thank many logicians, in particular Steffen Lempp and Ludovic Patey for inspiring discussions and suggestions, and the referee for suggesting a simpler proof of Proposition 2.4 and many other helpful comments. Chong's research was partially supported by NUS grants C-146-000-042-001 and WBS : R389-000-040-101.
Wang's research was partially supported by China NSF Grant 11471342.
Yang's research was partially supported by NUS AcRF Tier 1 grant R146-000-231-114 and MOE2016-T2-1-019.
All the authors acknowledge the support of JSPS-NUS grants R-146-000-192-133 and R-146-000-192-733 during the course of the work.}

\begin{abstract}
A set $X \subseteq 2^\omega$ with positive measure contains a perfect subset. We study such perfect subsets from the viewpoint of computability and prove that these sets can have weak computational strength. Then we connect the existence of perfect subsets of sets with positive measure with reverse mathematics.
\end{abstract}

\maketitle

\section{Introduction}

An observation made in algorithmic randomness is that a set with positive measure (a \emph{positive} set for short) in Cantor space contains members with limited computational strength, and in many cases those with limited computational strength form sets of measure $1$. As examples, almost no member of a positive set computes the halting problem or a given non-computable set, and the class of generalized low reals is of measure $1$. There are numerous such examples in algorithmic randomness. This paper is motivated by the general question: to what extent does the above observation hold for perfect subsets of a positive set?

Clearly a positive set contains many perfect subsets.  We identify a perfect set $C$ with a perfect (binary) tree
$$
    P = \{\sigma \in 2^{<\omega}: \sigma \text{ is an initial segment of some } X \in C\}.
$$
If $P$ is a perfect tree then it computes its own \emph{growth rate}
$$
    g_P: k \mapsto \min\{m: |P \cap 2^m| \geq 2^k\}.
$$
Since a perfect tree contains perfect subtrees of arbitrarily fast growth rate, every positive set contains perfect subsets which compute the halting problem. Hence in terms of computational strength it appears that the situation with perfect subsets is in sharp contrast to  that with members  of a positive set. Nevertheless, in Section \ref{s:computability} we demonstrate two computability-theoretic properties of members of a positive set which are shared by  perfect subsets of a positive set. The first is an analogue of the Low Basis Theorem for positive $\Pi^0_1$ classes. The second asserts that every positive set contains perfect subsets not computing a given non-computable set. We also include in Section \ref{s:computability} a proposition showing that the above results do not hold for effectively closed null sets. The statement that every positive set contains a perfect subset turns out to be interesting also from  the perspective of reverse mathematics, as we argue in Section \ref{s:RevMath}.

We fix the following  notations and  terminologies. If $T \subseteq 2^{<\omega}$ is a tree and $\sigma \in 2^{<\omega}$ then
$$
    T_\sigma = \{\tau \in T: \tau \text{ is comparable with } \sigma\}.
$$
The set $[T]$ of a tree $T$ is the set of all infinite paths on $T$. A binary tree $T$ is \emph{positive} iff $[T]$ is positive. An \emph{initial segment} of a tree $T$ is a tree $F$ such that  every node of $T$ is either a node of $F$ or an extension of a leaf of $F$. $T$ \emph{end-extends} $F$ if $F$ is an initial segment of $T$.

For other notions and notations, the reader is referred to Downey and Hirschfeldt  \cite{Downey.Hirschfeldt:2010.book}  for computability and algorithmic randomness, and to Simpson's monograph \cite{Simpson:2009.SOSOA} for reverse mathematics.

\section{The Computability of Perfect Subsets of Positive Sets}\label{s:computability}

We begin  with positive $\Pi^0_1$ classes. As usual, a $\Pi^0_1$ class is identified with the collection of infinite paths on a computable binary tree. Since there exist positive computable binary trees containing no computable paths, there exist positive computable trees with no computable perfect subtree. So the following analogue of the Low Basis Theorem for positive $\Pi^0_1$ classes is non-trivial. The idea is to define a tree of perfect subtrees of a computable positive tree and then apply the Low Basis Theorem. We need the following form of Lemma 8 in Ku\v{c}era \cite{Kucera:85}, which can be obtained in a way similar to \cite[Lemma 8.5.2]{Downey.Hirschfeldt:2010.book}.

\begin{lemma}[Ku\v{c}era]\label{lem:Kucera-coding}
For every positive tree $T$ and every $\epsilon > 0$ there exist an infinite subtree $S_\epsilon \subseteq T$ and a computable function $\rho_\epsilon: 2^{<\omega} \to \mathbb{Q}^+$ such that $S_\epsilon \leq_T T$, $\mu([S_\epsilon]) > \mu([T]) - \epsilon$ and
$$
    [S_{\epsilon, \sigma}] \neq \emptyset \text{ if and only if } \mu([S_{\epsilon, \sigma}]) > \rho_\epsilon(\sigma)
$$
for every $\sigma \in 2^{<\omega}$.
\end{lemma}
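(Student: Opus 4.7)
The plan is to prune $T$ by removing every subtree $T_\tau$ whose measure falls below a carefully tuned computable threshold, chosen summably enough that the mass lost is below $\epsilon$ yet large enough that each surviving node retains residual measure exceeding the prescribed bound $\rho_\epsilon(\sigma)$.

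First, I would fix a computable $\rho_\epsilon : 2^{<\omega} \to \mathbb{Q}^+$ with both $\sum_\tau 2\rho_\epsilon(\tau) < \epsilon$ and $\sum_{\tau \succ \sigma} 2\rho_\epsilon(\tau) \le \rho_\epsilon(\sigma)$ for every $\sigma$; the explicit choice $\rho_\epsilon(\sigma) = (\epsilon/6) \cdot 6^{-|\sigma|}$ gives total sum $\epsilon/2$ and makes the proper-descendants sum equal exactly $\rho_\epsilon(\sigma)/2$. Using $T$ as oracle, I would form the decreasing approximation $\mu_s([T_\tau]) = |T_\tau \cap 2^s|/2^s$ converging from above to $\mu([T_\tau])$, the $T$-c.e.\ set of pruned nodes $B = \{\tau \in T : \mu([T_\tau]) \le 2\rho_\epsilon(\tau)\}$, and set $S_\epsilon = T \setminus \bigcup_{\tau \in B} T_\tau$, equivalently $\sigma \in S_\epsilon$ iff $\sigma \in T$ and no prefix of $\sigma$ lies in $B$.

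The measure and iff conditions would then follow routinely. The total mass pruned from $[T]$ is at most $\sum_\tau 2\rho_\epsilon(\tau) < \epsilon$, so $\mu([S_\epsilon]) > \mu([T]) - \epsilon$. For $\sigma \in S_\epsilon$, the bound $\mu([T_\sigma]) > 2\rho_\epsilon(\sigma)$ (since $\sigma \notin B$) combined with at most $\sum_{\tau \succ \sigma} 2\rho_\epsilon(\tau) \le \rho_\epsilon(\sigma)$ being pruned above $\sigma$ forces $\mu([S_{\epsilon,\sigma}]) > \rho_\epsilon(\sigma)$. The iff then holds: $[S_{\epsilon,\sigma}] \ne \emptyset$ means a path through $\sigma$ survives in $S_\epsilon$, hence $\sigma \in S_\epsilon$ and $\mu > \rho_\epsilon$; conversely $\mu > \rho_\epsilon > 0$ trivially yields non-emptiness.

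The main obstacle is obtaining $S_\epsilon \leq_T T$, rather than the immediate ``$T$-co-c.e.''\ bound afforded by the construction: ``$\sigma \notin S_\epsilon$'' is $T$-c.e.\ (wait for some prefix $\tau \preceq \sigma$ to enter $B$), but a complementary c.e.\ certification of membership must be extracted from the measure gap furnished by the iff clause. Here $\mu([S_{\epsilon,\sigma}])$ lies in $\{0\} \cup (\rho_\epsilon(\sigma), \infty)$, and a $T$-computable upper approximation $\nu_s \downarrow \mu([S_{\epsilon,\sigma}])$, obtained by subtracting the measures of the minimal pruned descendants of $\sigma$ currently enumerated from $\mu_s([T_\sigma])$, either eventually drops to or below $\rho_\epsilon(\sigma)$ (certifying $\sigma \notin S_\epsilon$) or provably stays above it. Dovetailing the latter check against the enumeration of $B$, with a slight sharpening of the pruning threshold (say to $3\rho_\epsilon(\tau)$) to open a usable gap, yields a terminating $T$-computable decision procedure for membership in $S_\epsilon$; this is the technical heart of the argument and is carried out along the lines of \cite[Lemma~8.5.2]{Downey.Hirschfeldt:2010.book}.
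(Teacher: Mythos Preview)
The paper does not prove this lemma; it merely states it and cites Ku\v{c}era and \cite[Lemma~8.5.2]{Downey.Hirschfeldt:2010.book}. Your pruning construction is exactly the standard one underlying those references, and your measure estimates (the summability of $\rho_\epsilon$ and the descendant bound $\sum_{\tau\succ\sigma}2\rho_\epsilon(\tau)\le\rho_\epsilon(\sigma)$) correctly deliver both $\mu([S_\epsilon])>\mu([T])-\epsilon$ and the dichotomy $\mu([S_{\epsilon,\sigma}])\in\{0\}\cup(\rho_\epsilon(\sigma),\infty)$.

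The gap is in your argument for $S_\epsilon\le_T T$. Both procedures you propose---waiting for the upper approximation $\nu_s$ of $\mu([S_{\epsilon,\sigma}])$ to fall to $\rho_\epsilon(\sigma)$, and waiting for a prefix of $\sigma$ to enter $B$---are $\Sigma^T_1$ certificates for the \emph{same} event $\sigma\notin S_\epsilon$. Dovetailing two co-c.e.\ tests for non-membership never yields a positive certificate for membership, and sharpening the pruning threshold from $2\rho_\epsilon$ to $3\rho_\epsilon$ does not change this: you still have no finite-time witness that $\mu([T_\sigma])$ exceeds the threshold, since $\mu([T_\sigma])$ is only upper-semicomputable from $T$.

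The standard repair (and what the cited references actually do) is to build the decidability into the definition of $S_\epsilon$: declare $\sigma\in S_\epsilon$ iff $\sigma\in T$ and for every $\tau\preceq\sigma$ the \emph{stage-$|\sigma|$} approximation $\mu_{|\sigma|}([T_\tau])=|T\cap 2^{|\sigma|}\cap[\tau]|\,2^{-|\sigma|}$ exceeds $\rho_\epsilon(\tau)$. This is manifestly $T$-computable, and because $\mu_s([T_\tau])$ is non-increasing in $s$ the resulting $S_\epsilon$ is downward closed, hence a tree. The path space $[S_\epsilon]$ then coincides with the one obtained from your limit-based pruning, so your measure analysis and the iff clause carry over unchanged.
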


\begin{proposition}\label{prp:low-perfect-subtree}
If $T$ is a computable positive tree and $\epsilon > 0$ then there is a low perfect subtree $P$ of $T$ with $\mu([P]) > \mu([T]) - \epsilon$.
\end{proposition}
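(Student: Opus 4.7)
The plan is to construct a nonempty $\Pi^0_1$ class whose members code perfect subtrees of $T$ of measure greater than $\mu([T])-\epsilon$, and then apply the Low Basis Theorem. First, apply Lemma~\ref{lem:Kucera-coding} with $\epsilon/2$ in place of $\epsilon$ to obtain a computable subtree $S \subseteq T$ with $\mu([S]) > \mu([T])-\epsilon/2$ and a computable $\rho$ such that $[S_\sigma]\neq\emptyset$ iff $\mu([S_\sigma]) > \rho(\sigma)$. A useful consequence is that $[S]$ has no isolated points: for any $X\in[S]$ and any $n$, $[S_{X\uh n}]$ has positive measure and is hence uncountable. Since $\mu([S])$ is approximable from above, fix a rational $r$ with $\mu([T])-\epsilon < r < \mu([S])$.

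Encode perfect subtrees of $S$ by splitting functions $h : 2^{<\omega}\to S$ with $h(\sigma\frown 0)$ and $h(\sigma\frown 1)$ incomparable extensions of $h(\sigma)$ in $S$; the associated subtree $P_h$ is the downward closure of the range of $h$, and one computes $\mu([P_h]) = \lim_n \sum_{|\sigma|=n} 2^{-|h(\sigma)|}$ as a decreasing limit. Let $\mathcal{C}$ consist of those $h$ satisfying additionally the measure constraint $\sum_{|\sigma|=n} 2^{-|h(\sigma)|} \geq r$ for every $n$, together with a length bound $|h(\sigma)| \leq g(|\sigma|)$ for a specific computable $g$. Via a standard coding of bounded-length function tables, $\mathcal{C}$ is then a $\Pi^0_1$ class in $2^\omega$.

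For $g$ and the nonemptiness of $\mathcal{C}$, use a greedy construction: given $h(\sigma)=\tau$, let $\tau^*$ be the shortest node of $S$ extending $\tau$ admitting two incomparable live extensions, and set $h(\sigma\frown i) = \tau^*\frown i$. Since the stem from $\tau$ to $\tau^*$ has a unique live descendant at each level, $[S_\tau] = [S_{\tau^*}]$; combined with Ku\v{c}era's inequality, $\rho(\tau) < \mu([S_\tau]) \leq 2^{-|\tau^*|}$, yielding $|\tau^*| \leq \lceil \log_2(1/\rho(\tau))\rceil$ and hence a computable recursive upper bound on $|h(\sigma)|$, which defines $g$. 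Moreover, every path in $[S]$ passes through exactly one $h(\sigma)$ at each level $n$ (by following its splitting choices in $S$), so $[S] \subseteq \bigsqcup_{|\sigma|=n}[h(\sigma)]$ and consequently $\sum_{|\sigma|=n} 2^{-|h(\sigma)|} \geq \mu([S]) > r$. Thus the greedy $h$ lies in $\mathcal{C}$.

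The Low Basis Theorem applied to $\mathcal{C}$ then yields a low $h\in\mathcal{C}$, and the corresponding $P = P_h$ is a low perfect subtree of $T$ with $\mu([P]) \geq r > \mu([T])-\epsilon$. The main obstacle I anticipate is the careful setup of the encoding: verifying that the greedy bound $g$ makes $\mathcal{C}$ simultaneously nonempty and a genuine $\Pi^0_1$ class in $2^\omega$. This is precisely where Ku\v{c}era's $\rho$ does double duty, both ensuring $[S]$ is perfect and providing the computable depth bounds that make the encoding work.
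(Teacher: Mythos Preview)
Your proposal is correct and follows essentially the same approach as the paper: apply Ku\v{c}era's lemma to obtain $S$ and $\rho$, derive from $\rho$ a computable bound $g$ on the depth by which branching must occur, build a computably bounded $\Pi^0_1$ class whose members code perfect subtrees of $S$ of measure at least $r$, and apply the Low Basis Theorem. The only cosmetic difference is the encoding of perfect trees---you use splitting functions $h$ with $|h(\sigma)|\le g(|\sigma|)$, while the paper uses finite initial-segment trees $U\subset S$ required to branch between levels $g(i)$ and $g(i+1)$---but the recursive definition of $g$ from $\min\{\rho(\sigma):|\sigma|\le g(n)\}$ and the role of $\rho$ in providing computable depth bounds are identical.
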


\begin{proof}
In Lemma \ref{lem:Kucera-coding}, let $S = S_{\epsilon/2}$ and $\rho=\rho_{\epsilon/2}$ be the associated tree and function respectively. 

We define a  computable function $g: \omega \to \omega$ by induction as follows. Let $g(0) = 0$.
Suppose that $g(n) = k$ is defined. Let $r(n) = \min \{\rho(\sigma): |\sigma| \leq k\}$. By the assumption on $\rho$, $r(n) > 0$. Let $g(n+1)$ be the least $l > k$ such that $2^{-l} < r(n)$.

Fix a positive rational $\delta$ between $\mu([S]) - \epsilon/2$ and $1$. For each finite tree $U$, let $\|U\| = \max \{|\sigma|+1: \sigma \in U\}$. Let $\mathcal{F}$ be the set of finite trees $U \subset S$ satisfying the following conditions:
\begin{enumerate}
  \item if $n < \|U\|$ then $|U \cap 2^n| > \delta 2^n$;
  \item if $g(i+1) < \|U\|$ and $\sigma \in U \cap 2^{g(i)}$ then $\sigma$ has two distinct extensions in $U \cap 2^{g(i+1)}$.
\end{enumerate}
As $g$ is computable, $\mathcal{F}$ can be identified as a computably bounded computable tree. Moreover, every infinite sequence $X$ in $\mathcal{F}$ gives rise to a perfect subtree $\bigcup X \subseteq S$ with $\mu([\bigcup X]) \geq \delta$. We show that $\mathcal{F}$ is infinite.

For each $n$, let
$$
  U_n = \{\sigma \in S \cap 2^{\leq g(n+1)}: [S_\sigma] \neq \emptyset\} = \{\sigma \in S \cap 2^{\leq g(n+1)}: \mu([S_\sigma]) > \rho(\sigma)\}.
$$
As $\mu([S]) > \delta$, $U_n$ satisfies (1) above. Suppose that $g(i+1) < \|U_n\|$ and $\sigma \in U_n \cap 2^{g(i)}$. Then $\mu([S_\sigma]) > 2^{-g(i+1)}$. So $\sigma$ has two distinct extensions $\tau_0$ and $\tau_1$ in $S \cap 2^{g(i+1)}$ with both $S_{\tau_0}$ and $S_{\tau_1}$ positive. Hence $\sigma$ has two distinct extensions in $U_n \cap 2^{g(i+1)}$ and $U_n$ satisfies (2). This proves that $U_n \in \mathcal{F}$ and thus $\mathcal{F}$ is infinite.

By the Low Basis Theorem, $[\mathcal{F}]$ contains a low path $X$, and $\bigcup X$ is a low perfect subtree of $T$ as desired.
\end{proof}

The proof of the above proposition leads to the following corollary.

\begin{corollary}
Let $T \subseteq 2^{<\omega}$ be a positive tree.
\begin{enumerate}
    \item There exists an infinite tree $\mathcal{F}$ such that $\mathcal{F}$ is $T$-computable and computably bounded and $\bigcup X$ is a positive perfect subtree of $T$ for every $X \in [\mathcal{F}]$.
    \item Every set computing $T'$ is Turing equivalent to $T \oplus P$ for some positive perfect subtree $P \subseteq T$.
\end{enumerate}
\end{corollary}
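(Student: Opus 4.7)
For part (1), the plan is to relativize the proof of Proposition~\ref{prp:low-perfect-subtree} to the oracle $T$. The subtree $S = S_{\epsilon/2}$ and the function $\rho = \rho_{\epsilon/2}$ from Lemma~\ref{lem:Kucera-coding} are $T$-computable (with $\rho$ in fact computable), so the growth function $g$ and the collection $\mathcal{F}$ of finite trees $U \subset S$ satisfying conditions (1) and (2) are $T$-computable. Identifying a finite tree $U$ of height at most $n$ with a binary string of length at most $2^{n+1}$, the set $\mathcal{F}$ becomes a $T$-computable subtree of a computably bounded tree of codes. The verifications that $\mathcal{F}$ is infinite and that each $X \in [\mathcal{F}]$ yields a positive perfect subtree $\bigcup X \subseteq T$ carry over verbatim from the proposition.

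For part (2), fix $A \geq_T T'$ and let $\mathcal{F}$ be as in part (1). The plan is to find $X \in [\mathcal{F}]$ with $T \oplus X \equiv_T A$; then $P := \bigcup X$ will be a positive perfect subtree of $T$, and since the $n$th element of the path is $U_n = P \cap 2^{\leq g(n+1)}$ with $g$ computable, one has $X \leq_T T \oplus P$, giving $T \oplus P \equiv_T T \oplus X \equiv_T A$. The construction of $X$ proceeds by a Friedberg-style encoding inside the $\Pi^0_1(T)$ class $[\mathcal{F}]$. First observe that $[\mathcal{F}]$ has uncountably many paths: the flexibility in choosing each $U_n$ subject only to conditions (1) and (2) yields branching of $\mathcal{F}$ at every level, and distinct paths yield distinct unions $\bigcup X$. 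Using $A$ as oracle, which suffices to decide the $\Pi^0_1(T)$ predicate ``$\sigma \in \mathcal{F}$ has an infinite extension in $\mathcal{F}$'', one builds $X$ stage by stage, encoding each bit of $A$ by a branching choice at a suitable node of $\mathcal{F}$.

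The main obstacle is the Friedberg coding itself. We must maintain the invariant that our current commitment has at least two distinct infinite extensions in $\mathcal{F}$, so that the next bit of $A$ can be encoded as a branch choice; this requires staying inside the perfect kernel of $[\mathcal{F}]$, which is non-empty by the uncountability just observed. We must also ensure that the decoding of $A$ from $T \oplus P$ succeeds; this follows the relativized Jockusch--Soare jump inversion pattern for $\Pi^0_1$ classes with uncountably many paths, in which bits of $T'$ (computable from $A$) are interleaved into the encoding to force $X \oplus T \geq_T T'$ directly, after which the branching positions used during encoding are identifiable $T \oplus P$-computably and each $A(s)$ can be read off as the branch chosen at stage~$s$.
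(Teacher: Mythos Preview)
Your treatment of part~(1) is correct and matches the paper: relativize the construction of $\mathcal{F}$ from Proposition~\ref{prp:low-perfect-subtree} to $T$, and note (as the paper also stresses) that $\rho$, and hence $g$ and the bound on $\mathcal{F}$, are outright computable rather than merely $T$-computable.

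For part~(2) your plan diverges from the paper's and has a genuine gap. The paper appeals to Ku\v{c}era's coding (\cite[Theorem~7]{Kucera:85}, \cite[Theorem~8.5.1]{Downey.Hirschfeldt:2010.book}), which exploits the function $\rho$ from Lemma~\ref{lem:Kucera-coding}: since a node $\sigma$ of $S$ is extendible iff $\mu([S_\sigma])>\rho(\sigma)$, and $\rho$ is computable, the branching information needed to decode the construction can be recovered from $T\oplus P$ alone, without access to $T'$. Your Friedberg/Jockusch--Soare coding in the perfect kernel of $[\mathcal{F}]$ does not use this structure. In an arbitrary computably bounded $\Pi^0_1(T)$ class, locating the branching nodes of the perfect kernel is $T'$-hard, so the standard argument only yields jump inversion, $(T\oplus X)'\equiv_T A$, not $T\oplus X\equiv_T A$. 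Your proposed repair---interleave bits of $T'$ into the coding so that $T\oplus X\geq_T T'$---is circular: to read off even the first interleaved bit of $T'$ from $T\oplus X$ you would already need to identify the first branching node of the perfect kernel, and that identification is exactly what requires $T'$. There is no bootstrap available. To carry out part~(2) you must use the Ku\v{c}era-style measure information supplied by $\rho$, as the paper indicates, rather than a generic perfect-kernel argument.
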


\begin{proof}
(1) is essentially a part of the proof of Proposition \ref{prp:low-perfect-subtree}. Note that $\mathcal{F}$ is computably bounded instead of $T$-computably bounded, because $\rho_\epsilon$ in Lemma \ref{lem:Kucera-coding} is computable and so is the function $g$ defined in the proof of the above proposition. (2) can be obtained by combining the above proof with that of \cite[Theorem 7]{Kucera:85} (see also \cite[Theorem 8.5.1]{Downey.Hirschfeldt:2010.book}).
\end{proof}

However,  the conclusion of Proposition \ref{prp:low-perfect-subtree} does not hold for an arbitrary $\Pi^0_1$ class.

\begin{proposition}\label{prp:Pi1-perfect-subtree}
There exists a computable binary tree $T$ such that $[T]$ is a perfect set and every perfect subtree of $T$ computes the halting problem.
\end{proposition}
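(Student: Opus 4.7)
My plan is to build a computable binary tree $T$ whose body $[T]$ is perfect while the branching structure of $T$ rigidly encodes $\emptyset'$, so that any trim perfect subtree must reveal $\emptyset'$ uniformly. I fix a computable approximation $\{f(e,s)\}$ to $\chi_{\emptyset'}$ (say, $f(e,s)=1$ iff $e$ has been enumerated in $\emptyset'$ by stage $s$), a fast-growing computable sequence of coding levels $L_0 < L_1 < \cdots$, and construct $T$ in stages. At each coding level $L_e$, I install a coding gadget $G_e$ above every currently active node $\rho$: $\rho$ gets two children $\rho 0$ and $\rho 1$, both placed into $T$, but above these children only one side will develop an infinite subtree. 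Between coding levels I intersperse ``full-branching'' levels at which each active node is given two active children, so that the active subtree of $T$ doubles in width repeatedly, guaranteeing that $[T]$ is perfect.

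The side of $G_e$ that survives is controlled by the mind-changes of $f(e,\cdot)$ as follows. At each stage $s$, the construction extends only the children labelled $f(e,s)$ of the current leaves above gadget $G_e$, leaving the other side with no new extension at that stage. Because $f(e,\cdot)$ changes value at most once (namely at the stage $t_e$ when $e$ enters $\emptyset'$, if ever), in the final tree $T$ exactly one of $\rho 0, \rho 1$, namely $\rho\emptyset'(e)$, has an infinite subtree above it, while the other has only a finite subtree. Now if $P\subseteq T$ is any (trim) perfect subtree, every node of $P$ is extendable in $P$ and hence extendable in $T$, so every such node lies on the live side of each gadget through which it passes. Defining $\Psi^P(e) = \tau(L_e)$ for any $\tau\in P$ of length $L_e+1$ (which exists by perfectness of $P$) therefore yields $\Psi^P=\emptyset'$, and this reduction is uniform in $P$, giving $\emptyset'\leq_T P$.

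The main obstacle is reconciling the computability of $T$ with the non-computability of the labels $\emptyset'(e)$: the ``live side'' of each gadget $G_e$ is governed by $\emptyset'(e)$, which is not known in advance. The mind-change mechanism is precisely what resolves this---before $e$ enters $\emptyset'$ the construction grows the $0$-side of $G_e$; once $e$ enters, it halts growth on the $0$-side (leaving it finite) and begins growing the $1$-side. The delicate point is synchronizing all the gadgets simultaneously, so that at each stage only finitely many levels of $T$ need to be decided and the active portion of $T$ remains wide enough at every level to keep $[T]$ perfect. Verifying this combinatorial balancing, rather than the decoding itself, is the technical heart of the argument.
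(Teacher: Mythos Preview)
Your construction has a genuine gap: as described, the tree $T$ is \emph{not computable}. You place both children $\rho 0,\rho 1$ into $T$ at the coding level, but above $\rho 1$ you add nothing until $e$ enters $\emptyset'$ (``leaving the other side with no new extension at that stage''). Thus for any node $\sigma$ of length $L_e+2$ with $\sigma(L_e)=1$, we have $\sigma\in T$ if and only if $e\in\emptyset'$. So membership in $T$ decides the halting problem, and $T$ is merely c.e., not computable. The mind-change device is exactly what breaks computability here: the stage at which the $1$-side begins to grow is $t_e$, which is not computably bounded in $e$. Patching this by also growing the $1$-side from the start (say as a single non-branching path, to preserve the ``only one side is eventually rich'' idea) destroys your decoding $\Psi^P(e)=\tau(L_e)$, since a trim perfect $P$ could then pick up nodes with $\tau(L_e)=1-\emptyset'(e)$; and it also interferes with the installation of the later gadgets $G_{e'}$ along that thin path. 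Making all of this consistent is not the routine bookkeeping you suggest---it forces you to abandon fixed coding levels.

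The paper's proof sidesteps the whole synchronization problem by using the \emph{modulus function} $m$ of $\emptyset'$ rather than fixed coding levels. One takes
\[
C=\{X\in 2^\omega:\ \forall n\ (p^X(n)\ge m(n))\},
\]
where $p^X(n)$ is the position of the $n$-th $1$ in $X$. This is $\Pi^0_1$ by an approximation argument, so $C=[T]$ for a computable $T$; it is perfect because one can always extend by $0$'s and, once past $m(n)$, by a $1$. Any perfect subtree $P$ computes a path $X\in[P]$ with infinitely many $1$'s (follow branches that introduce a new $1$), and such an $X$ dominates $m$, hence computes $\emptyset'$. The key difference from your plan is that the ``coding level'' for $e$ is not fixed in advance but is wherever the $n$-th $1$ happens to occur; this is what makes the tree computable without any mind-change machinery.
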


\begin{proof}
Let $m$ be the modulus function of $\emptyset'$, i.e.,
$$
    m(x) = \min \{s: \forall e < x(\Phi_e(e) \downarrow \to \Phi_e(e)[s] \downarrow)\}.
$$
For each $X \in 2^{\omega}$, let $p^X(n)$ be the $n$-th $i$ such that $X(i) = 1$. So $p^X$ is total iff $X(i) = 1$ for infinitely many $i$. Let
$$
    C = \{X \in 2^\omega: \forall n (p^X(n) \geq m(n))\}.
$$
So $C$ is a perfect $\Pi^0_1$-class and every $X \in C$ with $p^X$ total computes the halting problem. Let $T$ be a computable binary tree such that $C = [T]$. If $P$ is a perfect subtree of $T$, then $P$ computes some $X \in [P] \subseteq C$ with $p^X$ total and $X$ in turn computes the halting problem. Hence $P$ computes the halting problem as well.
\end{proof}

Next, we move to perfect subtrees of arbitrary positive trees.

\begin{theorem}\label{thm:perfect-subtree-cone-avoidance}
Every positive tree has a perfect subtree not computing a given non-computable set.
\end{theorem}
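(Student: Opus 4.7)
The plan is to combine the tree of finite approximations $\mathcal{F}$ from the proof of Proposition \ref{prp:low-perfect-subtree} with a Jockusch--Soare style cone avoidance argument. First, I would apply Lemma \ref{lem:Kucera-coding} to $T$ (say with $\epsilon = 1/2$) to obtain a positive subtree $S \subseteq T$ with $S \leq_T T$ and a computable function $\rho$, then construct $\mathcal{F}$ exactly as in the proof of Proposition \ref{prp:low-perfect-subtree}. As observed in the corollary, $\mathcal{F}$ is $T$-computable, computably bounded, and for every $X \in [\mathcal{F}]$ the tree $P = \bigcup X$ is a positive perfect subtree of $S \subseteq T$; moreover, $X \equiv_T P$ since $X$ can be recovered from $P$ by reading off $P \cap 2^{\leq g(n+1)}$ for each $n$. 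Thus it suffices to produce $X \in [\mathcal{F}]$ with $A \not\leq_T X$.

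Next, I would construct $X$ as the limit of a chain of nodes $\tau_0 \subseteq \tau_1 \subseteq \cdots$ of $\mathcal{F}$, where at stage $e$ the extension $\tau_{e+1}$ is chosen to force either $\Phi_e^X \neq A$ or $\Phi_e^X$ non-total. The standard Jockusch--Soare dichotomy at each stage gives either (a) some $\tau' \in \mathcal{F}$ extending $\tau_e$ and some $n, v$ with $\Phi_e^{\tau'}(n) \downarrow = v \neq A(n)$, in which case take $\tau_{e+1} = \tau'$; or (b) for every $\tau' \in \mathcal{F}$ extending $\tau_e$ and every $n$, $\Phi_e^{\tau'}(n)$ either diverges or equals $A(n)$. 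In case (b), if some $n$ admits no extension with $\Phi_e^{\tau'}(n) \downarrow$, then any extension forces $\Phi_e^X(n)$ undefined and the requirement is met; otherwise one must derive a contradiction with the non-computability of $A$.

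The main obstacle will be case (b) with every $n$ convergent: the natural enumeration $\{(n, v) : \exists \tau' \in \mathcal{F} \text{ extending } \tau_e,\ \Phi_e^{\tau'}(n) \downarrow = v\}$ is only $T$-c.e., so one directly obtains $A \leq_T T$, which is insufficient when $A$ may itself be $T$-computable. To upgrade to $A$ honestly computable, I would exploit that only the constraint $U \subseteq S$ in the definition of $\mathcal{F}$ is $T$-dependent, while conditions (1), (2) and the bound $g$ are purely computable. Using the computable density function $\rho$ from Lemma \ref{lem:Kucera-coding}, one shows that every combinatorially valid extension of $\tau_e$ (satisfying (1) and (2) only) can be padded into a genuine element of $\mathcal{F}$ with the same $\Phi_e$-behavior on any prescribed finite domain. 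This yields a computable enumeration of the values in case (b), forcing $A$ to be honestly computable and producing the desired contradiction. With the dichotomy resolved at each stage, the limit $X = \bigcup_e \tau_e$ lies in $[\mathcal{F}]$, and $P = \bigcup X$ is the sought positive perfect subtree of $T$ with $A \not\leq_T P$.
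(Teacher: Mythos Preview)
Your plan has a genuine gap at exactly the point you flag. In case~(b) you only know that \emph{for $\tau'\in\mathcal{F}$} (i.e., $\tau'\subseteq S$ and satisfying (1), (2)) any halting $\Phi_e^{\tau'}(n)$ outputs $A(n)$. Your ``padding'' claim asserts that a merely combinatorially valid $U$ (satisfying (1), (2) but possibly $U\not\subseteq S$) with $\Phi_e^{U}(n)\!\downarrow=v$ can be modified into some $U'\in\mathcal{F}$ with the same output. There is no reason this should hold: the computation $\Phi_e^{U}(n)$ reads $U$ as an oracle, and the bits of $U$ that lie outside $S$ may be precisely the ones queried. The computable function $\rho$ only tells you which cylinders of $S$ are positive; it gives you no control over \emph{which} combinatorially valid finite trees actually lie inside $S$, so it cannot convert an arbitrary oracle $U$ into one in $\mathcal{F}$ while preserving a fixed $\Phi_e$-computation. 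Without this step, case~(b) yields only $A\leq_T T$, which is useless when, say, $A=T$.

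There is also a red flag you should notice: your route through $\mathcal{F}$ would, if it worked, produce a \emph{positive} perfect subtree $P\subseteq T$ with $A\not\leq_T P$. The paper records this as an open problem (Question~4.4). The paper's own proof takes a genuinely different route: a Mathias-style forcing with conditions $(F,T)$ where $F$ is a finite tree and $T$ is an infinite ``reservoir'' tree satisfying the invariant $X\not\leq_T T$ (initially $T=2^{<\omega}$, so the invariant is just the non-computability of $X$). At the $\Sigma_1$/$\Pi_1$ dichotomy for a given $e$, the $\Pi_1$ outcome is handled by applying relativized cone avoidance to an auxiliary $\Pi^T_1$ class of ``coherent'' trees, obtaining a new reservoir $S$ with $X\not\leq_T S$; the invariant is thus maintained inductively. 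The resulting perfect subtree is built as $\bigcup_n F_n$ and is in general \emph{not} positive, which is exactly what sidesteps the obstacle your approach runs into.
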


\begin{proof}
Fix a binary tree $\hat{T}$ with $\mu([\hat{T}]) > 0$ and a non-computable $X$.

We build a desired perfect subtree of $\hat{T}$ by a variant of Mathias forcing. A forcing condition is a pair $(F, T)$ such that $F$ is isomorphic to some $2^{<n}$, $X \not\leq_T T$ and
\begin{equation}\label{eq:psca-forcing-condition}
    \mu([(T \cap \hat{T})_\sigma]) > (1 - 2^{-2}) 2^{-|\sigma|}
\end{equation}
for every leaf $\sigma$ of $F$. An extension of a condition $(F,T)$ is a condition $(E, S)$ such that $F$ is an initial segment of $E$ and $S \subseteq T$. As $\mu([\hat{T}]) > 0$, there exists $\sigma$ with $\mu([\hat{T}_\sigma]) > (1 - 2^{-2}) 2^{-|\sigma|}$. Let $F = \{\sigma \upharpoonright l: l \leq |\sigma|\}$. Then $(F, 2^{<\omega})$ is a condition. Each condition $(F,T)$ represents the following set of perfect subtrees of $\hat{T}$:
$$
    [F, T] = \{P \subseteq \hat{T} \cap T: P \text{ is a perfect tree end-extending } F\}.
$$

Firstly, we show that for each $e$ the conditions forcing $\Phi_e(P) \neq X$ are dense. Fix a condition $(F,T)$ and some $e$, let $\mathcal{U}$ be the set of binary trees $S$ such that $S$ satisfies \eqref{eq:psca-forcing-condition} in  place of $\hat{T}$ and
$$
    (\forall x)((\forall i < 2)(E_i \text{ end-extends } F \wedge \Phi_e(E_i; x) \downarrow) \to \Phi_e(E_0; x) = \Phi_e(E_1; x))
$$
for  all pairs $(E_0,E_1)$ of finite subtrees of $S$. Note that $\mathcal{U}$ can be identified with a $\Pi^T_1$ class in Cantor space.

\begin{lemma}\label{lem:psca-forcing-Pi1}
Suppose that $\mathcal{U} \neq \emptyset$. Then there exists a condition $(E,S)$ extending $(F,T)$ such that $\Phi_e(P) \neq X$ whenever $P \in [E,S]$.
\end{lemma}

\begin{proof}
As $\mathcal{U}$ is $\Pi^T_1$ and $X \not\leq_T T$, we can apply the cone avoidance property of $\Pi^0_1$ classes relativized to $T$ and get an infinite binary tree $S_0 \in \mathcal{U}$ s.t. $X$ is not computable in $T \oplus S_0$. Let $S = T \cap S_0$. By the definition of $\mathcal{U}$, for each finite subtree $E$ of $S$ which end-extends $F$, the value of $\Phi_e(E; x)$ is independent of $E$ if the computation halts. Thus, if $P$ is a perfect subtree of $S$ end-extending $F$ and $\Phi_e(P)$ is total then $\Phi_e(P)$ is computable in $T \oplus S_0$ and thus cannot be equal to $X$. By \eqref{eq:psca-forcing-condition} and the version of \eqref{eq:psca-forcing-condition} for $S_0$,
$$
    \mu([(\hat{T} \cap S)_\sigma]) > (1 - 2^{-1}) 2^{-|\sigma|}
$$
for every leaf $\sigma$ of $F$. By the Lebesgue Density Theorem, we can extend the leaves  of $F$ to obtain a finite tree $E$ such that $E$ is isomorphic to $F$ and
$$
    \mu([(\hat{T} \cap S)_\tau]) > (1 - 2^{-2}) 2^{-|\tau|}
$$
for every leaf $\tau$ of $E$. Clearly, $(E,S)$ satisfies the conclusion.
\end{proof}

\begin{lemma}\label{lem:psca-forcing-Sigma1}
Suppose that $\mathcal{U} = \emptyset$. Then there exists $E$ such that $(E, T)$ is an extension of $(F,T)$ and $\Phi_e(E; n) \downarrow \neq X(n)$ for some $n$.
\end{lemma}

\begin{proof}
Let
\begin{equation}\label{eq:psca-tildeT}
    \tilde{T} = \{\sigma: \mu([(T \cap \hat{T})_\sigma]) > 0\} = T \cap \hat{T} \setminus  \{\sigma: \mu([(T \cap \hat{T})_\sigma]) = 0\}.
\end{equation}
Then $\tilde{T}$ is a binary tree satisfying \eqref{eq:psca-forcing-condition} in  place of $\hat{T}$. As $\tilde{T} \not\in \mathcal{U}$, there exist $n$ and a finite subtree $D$ of $\tilde{T}$ such that $D$ is an end-extension of $F$ and $\Phi_e(D; n) \downarrow \neq X(n)$. By \eqref{eq:psca-tildeT}, we may assume that $D$ is isomorphic to some $2^{<l}$. By the Lebesgue Density Theorem and \eqref{eq:psca-tildeT}, we have a finite end-extension $E$ of $D$ obtained  by extending each leaf $\sigma$ of $D$ to some $\tau$ such that $\mu([(T \cap \hat{T})_\tau]) > (1-2^{-2}) 2^{-|\tau|}$. Then $(E,T)$ is the desired extension of $(F,T)$.
\end{proof}

The conditions $(F,T)$ with  splitting extensions  on $F$ are also dense:

\begin{lemma}\label{lem:psca-forcing-infinity}
Every condition $(E,T)$ with $E$ isomorphic to $2^{<n}$ has an extension $(F,T)$ with $F$ isomorphic to $2^{<n+1}$.
\end{lemma}

\begin{proof}
Suppose that $(E,T)$ is a condition with $E$ isomorphic to $2^{<n}$. For each leaf $\sigma$ of $E$, by the Lebesgue Density Theorem and \eqref{eq:psca-forcing-condition} choose
 two incomparable $\tau(\sigma,0)$ and $\tau(\sigma,1)$ extending $\sigma$ such that
$$
    \mu([(T \cap \hat{T})_{\tau(\sigma,i)}]) > (1 - 2^{-2}) 2^{-|\tau(\sigma,i)|}
$$
for $i < 2$. Let
$$
    F = \{\eta: \eta \text{ is an initial segment of some } \tau(\sigma,i), i < 2, \sigma \text{ is a leaf of } E\}.
$$
Then $(F,T)$ is as desired.
\end{proof}

Theorem \ref{thm:perfect-subtree-cone-avoidance} follows from Lemmata \ref{lem:psca-forcing-Pi1}, \ref{lem:psca-forcing-Sigma1} and \ref{lem:psca-forcing-infinity}.
\end{proof}

%Let us indulge ourselves for a moment with the following fancy corollary.

\begin{corollary}
Given a non-computable $X$, every positive set has a perfect subset not computing $X$.
\end{corollary}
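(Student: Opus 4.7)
The plan is to deduce the corollary from Theorem \ref{thm:perfect-subtree-cone-avoidance} by first locating a positive binary tree inside the given positive set. Fix a non-computable $X$ and a positive set $A \subseteq 2^\omega$, so that $A$ is measurable with $\mu(A) > 0$. By the inner regularity of Lebesgue measure on $2^\omega$, there is a closed set $C \subseteq A$ with $\mu(C) > 0$. Writing $C = [T]$ for the unique binary tree $T = \{\sigma \in 2^{<\omega} : [\sigma] \cap C \neq \emptyset\}$, we obtain a positive tree $T$ with $[T] \subseteq A$.

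Now I apply Theorem \ref{thm:perfect-subtree-cone-avoidance} to $T$ and $X$, which is legitimate because that theorem is stated for an arbitrary positive tree, not merely a computable one. This yields a perfect subtree $P \subseteq T$ with $X \not\leq_T P$. Under the identification of a perfect set with its perfect tree that was fixed in the introduction, the perfect set $[P] \subseteq [T] \subseteq A$ does not compute $X$, so $[P]$ is the desired perfect subset of $A$. There is no real obstacle here: the substance of the argument is entirely contained in Theorem \ref{thm:perfect-subtree-cone-avoidance}, and the passage from positive measurable sets to positive trees is a routine application of inner regularity.
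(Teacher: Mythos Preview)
Your argument is correct and follows exactly the approach in the paper: the paper's one-line proof invokes Theorem \ref{thm:perfect-subtree-cone-avoidance} together with the fact that every positive set has a positive closed subset, and you have simply spelled out that second fact as inner regularity and made the passage from closed set to tree explicit.
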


\begin{proof}
It follows from Theorem \ref{thm:perfect-subtree-cone-avoidance} and that every positive set has a positive closed subset.
\end{proof}

\section{Perfect Sets and Reverse Mathematics}\label{s:RevMath}

As observed by Lempp, the statement ``every positive closed set contains a perfect subset'' can be formulated as a $\Pi^1_2$ sentence in second order arithmetic. In general, if $\Lambda$ is a pointclass then we may consider the following statement:
\begin{equation}\label{eq:positive-perfect}
    \text{every positive set in } \Lambda \text{ contains a perfect subset.}
\end{equation}
Clearly, the proposition that every positive closed set contains a perfect subset implies $\WWKL$ over $\RCA$, and it is a consequence of $\WKL$ by the proof of Proposition \ref{prp:low-perfect-subtree}. So it is natural to ask whether \eqref{eq:positive-perfect} for closed sets is a substantially new principle. During a discussion with the third author, Ludovic Patey proved that this instance for closed set does not imply $\WKL$ over $\RCA$, by combining ideas from the proof of Theorem \ref{thm:perfect-subtree-cone-avoidance} and Liu \cite{Liu:2012}. But whether $\WWKL$ implies \eqref{eq:positive-perfect} for closed sets remains open.

Here, we present another connection between the \eqref{eq:positive-perfect} family and reverse mathematics. Recall that $\TT^1$ is the proposition that for every finite coloring $f: 2^{<\omega} \to n$ there exists an $f$-homogeneous perfect tree $T$.

\begin{proposition}[$\RCA$]\label{prp:P-TT1}
The instance of \eqref{eq:positive-perfect} for $\Pi^0_2$ sets implies $\TT^1$.
\end{proposition}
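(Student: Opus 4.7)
The plan is to reduce $\TT^1$ to the given instance of \eqref{eq:positive-perfect}. Given a finite coloring $f : 2^{<\omega} \to N$, associate to each color $i < N$ the $\Pi^0_2$ class
$$
    A_i = \{X \in 2^\omega : f(X \upharpoonright k) = i \text{ for infinitely many } k\}.
$$
Since every $X \in 2^\omega$ has infinitely many initial segments colored by only $N$ colors, pigeonhole gives $\bigcup_{i<N} A_i = 2^\omega$. Finite subadditivity of Lebesgue measure then forces $\mu(A_i) \geq 1/N$ for some $i$, so $A_i$ is a positive $\Pi^0_2$ set. Applying the hypothesis to $A_i$ yields a perfect subset $C \subseteq A_i$; let $P \subseteq 2^{<\omega}$ be the associated perfect tree, so $[P] \subseteq A_i$.

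From $P$ I would extract an $f$-homogeneous perfect subtree by a direct recursion. The key observation is that every $\sigma \in P$ has a color-$i$ extension in $P$: because $P$ is perfect, $\sigma$ lies on some path $X \in [P] \subseteq A_i$, and $X$ has infinitely many color-$i$ initial segments, all of which belong to $P$. Using this, build a tree embedding $h: 2^{<\omega} \to P$ inductively by letting $h(\emptyset)$ be the first color-$i$ node of $P$ and, given $h(\sigma)$ with $f(h(\sigma)) = i$, first locating the least branching node $\sigma'$ of $P$ above $h(\sigma)$ and then, for each $j \in \{0,1\}$, setting $h(\sigma \concat j)$ to be the least color-$i$ extension of $\sigma' \concat j$ in $P$. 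By construction, $h$ is a tree embedding with $f(h(\sigma)) = i$ for all $\sigma$, so the image $T = \{h(\sigma) : \sigma \in 2^{<\omega}\}$ is a perfect subtree of $2^{<\omega}$ on which $f$ is constantly $i$, witnessing $\TT^1$ for $f$.

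Since the recursion defining $h$ is computable in $P \oplus f$, the set $T$ exists in $\RCA_0$. I expect the main subtlety to lie in the measure step: verifying in $\RCA_0$ that some $A_i$ has positive measure. This amounts to finite subadditivity of measure applied to the $\Pi^0_2$ cover $\{A_i\}_{i<N}$ of $2^\omega$, which can be obtained by passing to the limit in the analogous inequality for the decreasing open approximations $V_{i,k} = \{X : \exists m \geq k,\ f(X \upharpoonright m) = i\}$, whose union over $i$ is all of $2^\omega$ for every $k$; this should go through in the standard $\RCA_0$ treatment of Cantor-space measure.
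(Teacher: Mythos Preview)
Your overall plan matches the paper's: define the colour classes $A_i$, show one of them is a positive $\Pi^0_2$ set, apply the hypothesis to get a perfect tree $P$ with $[P]\subseteq A_i$, and then thin $P$ to an $f$-homogeneous perfect tree by a $P\oplus f$-computable recursion. The last step is fine and is exactly what the paper does (invoking $I\Sigma^0_1$).

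The gap is precisely where you suspected, and your proposed workaround does not close it. In $\RCA_0$ the number of colours $N$ may be nonstandard, and then:
\begin{itemize}
\item The assertion $\bigcup_{i<N} A_i = 2^\omega$ is, for each $X$, an instance of the infinite pigeonhole principle $\mathsf{RT}^1_N$ for the colouring $k\mapsto f(X\!\upharpoonright\! k)$; over $\RCA_0$ this is equivalent to $B\Sigma^0_2$, so it is not available for free.
\item Your fallback via the open approximations $V_{i,k}$ does give $\sum_{i<N}\mu(V_{i,k})\geq 1$ for every $k$, but to conclude that some single $i$ has $\inf_k\mu(V_{i,k})\geq 1/N$ you must interchange the limit in $k$ with the length-$N$ sum. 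The standard proof of that interchange picks, for each $i<N$, a stage $K_i$ at which $\mu(V_{i,K_i})$ is close to its limit and then takes $K=\max_{i<N}K_i$; forming this maximum over a possibly nonstandard index set is again a bounding step not available in bare $\RCA_0$.
\end{itemize}

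The paper handles this by first observing that the hypothesis already yields $B\Sigma^0_2$: the instance of \eqref{eq:positive-perfect} for $\Pi^0_2$ sets trivially implies $2\text{-}\mathsf{POS}$ (a positive $\Pi^0_2$ set is nonempty, since it contains a perfect set), and $2\text{-}\mathsf{POS}$ implies $B\Sigma^0_2$ over $\RCA_0$ by Avigad--Dean--Rute. With $B\Sigma^0_2$ in hand, the pigeonhole/subadditivity step for the $N$ classes $A_i$ is legitimate, and the rest of your argument goes through unchanged. So the missing ingredient is exactly this preliminary derivation of $B\Sigma^0_2$ from the hypothesis.
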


\begin{proof}
It is obvious that over $\RCA$ the instance of \eqref{eq:positive-perfect} for $\Pi^0_2$ sets implies $2\mathsf{-POS}$, which is the statement introduced by Avigad et al. \cite{Avigad.Dean.ea:2012} that every $\Pi^0_2$ set with positive outer measure is non-empty. By \cite[Theorem 3.7 ]{Avigad.Dean.ea:2012}, $\RCA + 2\mathsf{-POS} \vdash B\Sigma^0_2$.

Let $\mathcal{M}$ be a model of $\RCA$ and the instance of \eqref{eq:positive-perfect} for $\Pi^0_2$ sets. Let $f \in \mathcal{M}$ be a coloring of the full binary tree with $a \in \mathcal{M}$ many colors. Since $\mathcal{M} \models B\Sigma^0_2$ by the above paragraph, there exists $i < a$ such that the following $\Pi^{0,f}_2$ set has outer measure at least $1/a$:
$$
    \mathcal{A} = \{X: \forall m \exists n > m(f(X \upharpoonright n) = i)\}.
$$
An application of \eqref{eq:positive-perfect} to $\mathcal{A}$ produces a perfect tree $T \in \mathcal{M}$ such that $[T] \subseteq \mathcal{A}$. So
$$
    \forall \sigma \in T \exists \tau \in T(\sigma \text{ is an initial segment of } \tau \wedge f(\tau) = i).
$$
By $I\Sigma^0_1$ in $\mathcal{M}$, there exists $P \in \mathcal{M}$ which is computable in $T$ and $f$-homogeneous.
\end{proof}

Corduan et al. \cite{Corduan.Groszek.ea:2010.TT1} prove that $\TT^1$ is strictly stronger than $B\Sigma^0_2$ over $\RCA$, and recently the authors \cite{Chong.Li.ea:2018.TT1} prove that $\TT^1$ is strictly weaker than $I\Sigma^0_2$ over $\RCA$. But the exact first order theory of $\TT^1$ remains unknown. By Proposition \ref{prp:P-TT1}, the instance of \eqref{eq:positive-perfect} for $\Pi^0_2$ sets can be regarded as a natural strengthening of $\TT^1$, hence it could be an interesting subject as well as $\TT^1$. At the moment, we do not know much more about this instance, except the following immediate corollary of Theorem \ref{thm:perfect-subtree-cone-avoidance}.

\begin{corollary}[$\RCA$]\label{cor:projective-P}
The schema of \eqref{eq:positive-perfect} for definable sets (equivalently the schema of \eqref{eq:positive-perfect} for projective sets) is strictly weaker than $\ACA$.
\end{corollary}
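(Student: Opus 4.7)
The corollary has two parts: $\ACA$ implies the schema for definable sets, and the schema does not imply $\ACA$ over $\RCA$. The plan is to address these directions separately.

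For the upper bound, I would work in $\ACA$ with a fixed projective formula $\varphi$ defining a set $\mathcal{A}$ of positive outer measure. The idea is first to extract a positive closed subset $\mathcal{C} = [\hat{T}] \subseteq \mathcal{A}$ via a regularity-of-measure argument that $\ACA$ can carry out, and then to apply the formalization of Theorem \ref{thm:perfect-subtree-cone-avoidance} to $\hat{T}$. The forcing in that proof relies only on $\Pi^0_1$-cone-avoidance relativized to parameters, together with induction over $\omega$-many stages, both of which are available in $\ACA$. The output is a perfect subtree of $\hat{T}$, hence a perfect subset of $\mathcal{A}$.

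For strictness, my plan is to construct a countable $\omega$-model $\mathcal{M}$ of $\RCA$ that satisfies the schema but omits $\emptyset'$. Begin with $\mathcal{M}_0$ being the computable sets, and iteratively enumerate all pairs $(\varphi_n,\vec{p}_n)$ with parameters $\vec{p}_n$ from the current stage of the model. At stage $n$, if $\{X : \varphi_n(X;\vec{p}_n)\}$ has positive outer measure in $\mathcal{M}_n$, invoke Theorem \ref{thm:perfect-subtree-cone-avoidance} relative to $\vec{p}_n$ to add a perfect tree $P_n$ with $\emptyset' \not\leq_T P_n \oplus \vec{p}_n$. Close under Turing join and reducibility to obtain $\mathcal{M}_{n+1}$. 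The limit $\mathcal{M}$ is a Turing ideal, hence an $\omega$-model of $\RCA$; by construction it satisfies the schema, and by iterated cone avoidance $\emptyset' \notin \mathcal{M}$, so $\mathcal{M} \not\models \ACA$.

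The main obstacle will be the bookkeeping in the strictness construction: as new sets enter $\mathcal{M}$, the collection of definable positive sets expands because new parameters become available, so the dovetailing must revisit pairs as parameters accumulate to guarantee that every eventually-definable positive set receives a perfect subset in $\mathcal{M}$. A standard priority-style enumeration handles this. One should also verify that outer-measure judgments remain consistent as the model grows, but this follows from the monotonicity of outer measure under second-order extensions of the universe.
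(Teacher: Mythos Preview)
Your strictness argument---iterating the cone-avoidance of Theorem~\ref{thm:perfect-subtree-cone-avoidance} (more precisely, its Corollary for arbitrary positive sets) to build a countable Turing ideal omitting $\emptyset'$---is exactly what the paper intends when it calls this an ``immediate corollary.'' The dovetailing over formulas and accumulating parameters is routine, and your remarks on it are fine.

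Two points on the upper-bound half. First, you propose to formalize Theorem~\ref{thm:perfect-subtree-cone-avoidance} inside $\ACA$, but that theorem is a cone-avoidance result, and cone avoidance plays no role in showing that $\ACA$ \emph{proves} the schema: once you have a positive closed subset $[\hat T]\subseteq\mathcal A$, all you need is \emph{some} perfect subtree of $\hat T$, and that is already delivered by the proof of Proposition~\ref{prp:low-perfect-subtree} (hence by $\WKL$). Invoking the forcing machinery here is unnecessary.

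Second, and more substantively, your plan to ``extract a positive closed subset $\mathcal C\subseteq\mathcal A$ via a regularity-of-measure argument that $\ACA$ can carry out'' is not innocuous when $\mathcal A$ is merely a projective set of positive \emph{outer} measure. Inner regularity for projective sets is not available in $\ACA$ in general (under $V{=}L$ there are lightface $\Delta^1_2$ sets of full outer measure with no perfect subset), so this step does not go through as stated. The paper avoids the issue by reading ``positive'' as in the Corollary following Theorem~\ref{thm:perfect-subtree-cone-avoidance}---i.e., the set contains a positive closed subset---under which reading the upper bound is trivial and the only content of Corollary~\ref{cor:projective-P} is the strict-weakness direction, supplied by Theorem~\ref{thm:perfect-subtree-cone-avoidance}. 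You should either adopt that reading or flag the regularity step as the point where an assumption on the meaning of ``positive'' enters.
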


A $\Pi^{0,X'}_1$ set is also a $\Pi^{0,X}_2$ set. By \cite[Proposition 3.4]{Avigad.Dean.ea:2012}, over $\RCA + B\Sigma^0_2$ every positive $\Pi^{0,X}_2$ set contains a positive $\Pi^{0,X'}_1$ set. Moreover, that every positive $\Pi^{0,X'}_1$ set contains a perfect subset where $X$ ranges over all second order elements, trivially implies $\mathsf{2-WWKL}$ and thus also implies $B\Sigma^0_2$ by \cite[Theorem 3.7]{Avigad.Dean.ea:2012}. So,  \eqref{eq:positive-perfect} for $\Pi^0_2$ sets and \eqref{eq:positive-perfect} for all $X$ and all $\Pi^{0,X'}_1$ sets are equivalent over $\RCA$. It is then natural to wish that the proof of Theorem \ref{thm:perfect-subtree-cone-avoidance} could be useful for studying \eqref{eq:positive-perfect} for $\Pi^0_2$ sets in non-standard models. But the current form of the proof depends on certain instances of Lebesgue Density Theorem, which means that the proof does not work in the absence of $I\Sigma^0_2$, by the following observation.

\begin{proposition}[$I\Sigma_1$]\label{prp:Density}
The following statements are equivalent:
\begin{enumerate}
    \item $I\Sigma_2$;
    \item if $T$ is a $\emptyset'$-computable tree with $\mu([T]) > 0$ and $\epsilon > 0$ then there exists a $\sigma$ such that $\mu([T_\sigma]) > (1 - \epsilon) 2^{-|\sigma|}$.
\end{enumerate}
\end{proposition}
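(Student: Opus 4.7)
The plan is to prove both directions: $(1)\Rightarrow(2)$ by formalizing the Lebesgue density theorem for $\emptyset'$-computable trees using $I\Sigma_2$, and $(2)\Rightarrow(1)$ by contraposition, constructing a $\emptyset'$-computable counterexample tree from a failure of $I\Sigma_2$.

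For $(1)\Rightarrow(2)$, the key object is the density function $d(\sigma)=2^{|\sigma|}\mu([T_\sigma])$, a bounded martingale on $2^{<\omega}$ satisfying $d(\sigma)\le 1$ and $d(\sigma)=(d(\sigma 0)+d(\sigma 1))/2$. Since $T$ is $\emptyset'$-computable, $d$ is $\emptyset''$-approximable. I would argue contrapositively: assuming $d(\sigma)\le 1-\epsilon$ for every $\sigma$, I deduce $\mu([T])=0$. The strategy is to formalize Levy's upward theorem (equivalently Lebesgue density) for this martingale, asserting that $d(X\uh n)\to 1_{[T]}(X)$ for almost every $X$; the assumed uniform upper bound $1-\epsilon$ then forces $1_{[T]}(X)\le 1-\epsilon$ almost everywhere, contradicting positive measure. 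The role of $I\Sigma_2$ is to carry out Doob's up-crossing estimate for the $\emptyset''$-approximable martingale $d$ and to control the essentially $\Pi_2$ statement that almost every $X$ has only finitely many up-crossings of $[1-\epsilon,\,1-\epsilon/2]$.

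For $(2)\Rightarrow(1)$, assume $\neg I\Sigma_2$. Fix a $\Sigma_2$ formula $\phi(n)\equiv\exists y\,\forall z\,\psi(n,y,z)$ and $a\in\mathcal{M}$ with $\phi(0)$, $\neg\phi(a)$, and no $n<a$ satisfying $\phi(n)\wedge\neg\phi(n+1)$. Using the canonical $\emptyset'$-approximation $\{\phi_s\}$, I would build a $\emptyset'$-computable tree $T$ by pruning the full binary tree in stages: whenever the approximation flips at some coordinate $n$ at stage $s$, remove a cylinder of summable measure above a chosen $\sigma$, forcing $d(\sigma)\le 1/2$ at that node. Because no genuine $\phi$-transition exists in $\mathcal{M}$, the pruning only responds to the spurious approximation flips that $\emptyset'$ can isolate; the cumulative measure removed is bounded by a geometric series summing to less than $1$, so $\mu([T])>0$, while the density of every node is kept below $1/2$ by the schedule.

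The main obstacle is the reverse direction, where one must maintain within $\mathcal{M}$ three simultaneous invariants: that $T$ is $\emptyset'$-computable, every $\sigma\in T$ satisfies $d(\sigma)\le 1/2$, and $\mu([T])>0$. The essential idea is that $\neg I\Sigma_2$ is precisely what allows the pruning to keep pace with the approximation without ever terminating — in any model of $I\Sigma_2$ the genuine $\phi$-transition would eventually be located and the construction would cease, leaving behind a high-density node. Verifying the correspondence between the chosen failure of $\Sigma_2$-induction and the required measure-theoretic pathology, and confirming that the estimates survive in $\mathcal{M}$'s internal measure theory, form the technical heart of the proof.
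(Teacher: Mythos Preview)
Your $(1)\Rightarrow(2)$ is morally in the right direction but the route through Levy's upward theorem and Doob upcrossings is needlessly heavy, and it is not clear the argument you sketch actually lives inside $I\Sigma_2$: the assertion ``almost every $X$ has only finitely many upcrossings of $[1-\epsilon,1-\epsilon/2]$'' is not obviously of a complexity that $\Sigma_2$-induction alone controls, and you still have to extract a single density point from an almost-everywhere statement. The paper replaces all of this by an elementary counting argument. Fix $n$ with $2^{-n}<\epsilon\delta$ (where $\mu([T])>\delta$), use $I\Sigma_2$ to take the largest $k\le 2^n$ for which some level $l$ satisfies $|2^l\setminus T|\ge k\,2^{l-n}$, and argue that if every $\sigma\in T\cap 2^l$ had density at most $1-\epsilon$, then a common bound $m$ on the witnessing depths (available by $B\Sigma_2$) would push the deficiency fraction at level $l+m$ past $(k+1)2^{-n}$, contradicting the maximality of $k$. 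This is just the discrete Lebesgue density computation; $I\Sigma_2$ enters only to form the maximum of a bounded $\Sigma_2$ set and to collect a $\Sigma_2$-indexed family of witnesses.

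Your $(2)\Rightarrow(1)$ has a genuine gap. The scheme ``remove a summable amount of measure each time the approximation flips'' cannot force $d(\sigma)\le 1/2$ at \emph{every} node while keeping $\mu([T])>0$: to depress the density at $\sigma$ you must remove a fixed fraction of the cone $[\sigma]$, and no global geometric-series bookkeeping reconciles doing this at every $\sigma$ with positive total measure. Nor does a single failing instance of $\Sigma_2$-induction give you a sequence of ``flips'' that is cofinal in $M$. The paper resolves this tension by extracting from $\neg I\Sigma_2$ a $\Sigma_2$-cut $I\subset M$ bounded by some $b\in M$ together with a $\Sigma_2$ function $g:I\to M$ that is \emph{cofinal} in $M$, approximated by a uniformly computable family $(g_s)$. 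The tree is then the set of $\sigma$ such that for every $i<b$, if $g(i)$ has stabilised below $|\sigma|$ then $\sigma(g(i))=1$. Because $I\subseteq b$, at most $b$ bit-positions are ever constrained, so $|T\cap 2^s|\ge 2^{s-b}$ and $\mu([T])>0$; because $g$ is cofinal, for every $\sigma$ some constraint sits strictly above $|\sigma|$, forcing one further bit in every extension and yielding $\mu([T_\sigma])\le 2^{-|\sigma|-1}$. The simultaneous features ``only boundedly many constraints'' (the cut is bounded) and ``constraints at cofinally many positions'' (the function is cofinal) are precisely the mechanism your sketch is missing.
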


\begin{proof}
(2) $\Rightarrow$ (1). Let $M \models I\Sigma_1 + \neg I\Sigma_2$. As $M \not\models I\Sigma_2$, there exist a $\Sigma_2$-cut $I \subset M$ and a $\Sigma_2$-function $g: I \to M$ which is cofinal in $M$. Moreover, there is a uniformly computable family $(g_s: s \in M)$ such that $I = \{i: g(i) = \lim_s g_s(i)\}$. Let $b \in M$ be an upper bound of $I$ and let $T$ be the set of $\sigma \in M \cap 2^{<M}$ such that
$$
    (\forall i < b) ((\forall t > |\sigma|) (g_{|\sigma|}(i) = g_t(i) < |\sigma|) \to \sigma(g_{|\sigma|}(i)) = 1).
$$
By $I\Sigma_1$ (indeed $B\Sigma_1$), the above definition is equivalent to a $\Sigma_1$ formula, so $T$ is a $\emptyset'$-computable tree and $T \cap 2^s \in M$ for every $s \in M$. Hence the following calculation can proceed in $M$ for each $s \in M$, 
$$
    |T \cap 2^s| \geq (1 - \sum_{k < |F_s|} 2^{-k-1}) 2^s > 2^{s-b},
$$
where $F_s = \{i < b: \forall t > s (g_s(i) = g_t(i) < s)\} \in M$. Hence $\mu([T]) > 0$. But from the definition of $T$ and the cofinality of $g$, $\mu([T_\sigma]) \leq 2^{-|\sigma|-1}$ for any $\sigma$. So (2) implies (1).

(1) $\Rightarrow$ (2). Now let $M \models I\Sigma_2$ and $T$ be a $\emptyset'$-computable tree with $\mu([T]) > \delta > 0$. We work in $M$. Fix $\epsilon > 0$. Pick $n$ such that $2^{-n} < \epsilon \delta$. By $I\Sigma_2$, let
$$
    k = \max\{j \leq 2^n: \exists l (|2^l - T| \geq j 2^{l-n})\}
$$
and let $l$ be such that $|2^l - T| \geq k 2^{l-n}$. As $\mu([T]) > \delta$, $|2^l \cap T| > \delta 2^l$ and $k < 2^n$. We claim that $\mu([T_\sigma]) > (1 - \epsilon) 2^{-|\sigma|}$ for some $\sigma \in 2^l \cap T$. For if otherwise, each $\sigma \in 2^l \cap T$ corresponds to some $l_\sigma$ such that
$$
    |T_\sigma \cap 2^{l+l_\sigma}| \leq (1-\epsilon) 2^{l_\sigma}.
$$
By $B\Sigma_2$, we can find a common upper bound $m$ of all these $l_\sigma$'s. Then
$$
    |2^{l+m} - T| > k 2^{-n} + \epsilon \delta > (k+1) 2^{-n},
$$
contradicting the maximality of $k$. So (1) implies (2).
\end{proof}

\section{Questions}\label{s:questions}

We conclude this article with some questions, including those mentioned in the previous sections.

\begin{question}
Does $\WWKL$ imply that every positive closed set contains a perfect subset?
\end{question}

A related computability question is as follows.

\begin{question}
Is there a (computable or not) tree $T$ such that $\mu([T]) > 0$ but the oracles computing a perfect subtree of $T$ form a null set?
\end{question}

In light of Section \ref{s:RevMath}, we could ask many questions about \eqref{eq:positive-perfect} for $\Pi^0_2$ sets, in particular the specific one below.

\begin{question}
Does \eqref{eq:positive-perfect} for $\Pi^0_2$ sets imply $I\Sigma^0_2$ over $\RCA$?
\end{question}

Unlike Proposition \ref{prp:low-perfect-subtree}, the perfect subtrees obtained in Theorem \ref{thm:perfect-subtree-cone-avoidance} are not positive. So we may raise the following question.

\begin{question}
Does every positive tree contain a positive perfect subtree which does not compute some fixed non-computable set?
\end{question}

\bibliographystyle{plain}

\end{document}